\newcommand{\CC}{\mathbb{C}}
\newcommand{\ZZ}{\mathbb{Z}}
\newcommand{\PP}{\mathbb{P}}
\newcommand{\mcC}{\mathcal{C}}
\newcommand{\mcB}{\mathcal{B}}
\newcommand{\mcO}{\mathcal{O}}
\newcommand{\Supp}{\mathop{\mathrm{Supp}}\nolimits}
\newcommand{\Cmb}{\mathop{\mathrm{Cmb}}\nolimits}
\newtheorem{thm}{Theorem}[section]
\newtheorem{cor}[thm]{Corollary}
\newtheorem{prop}[thm]{Proposition}
\theoremstyle{definition}
\newtheorem{defin}[thm]{Definition}
\newtheorem{ex}[thm]{Example}
\theoremstyle{remark}
\newtheorem{rem}[thm]{Remark}
\renewcommand{\thesubparagraph}{\theparagraph.\@arabic\c@subparagraph}
\newcommand\blfootnote[1]{%
  \begingroup
  \renewcommand\thefootnote{}%
  \footnote{#1}%
  \endgroup
}
\begin{document}

\title{Examples of strong Ziegler pairs of conic-line arrangements of degree 7 and 8}

\author{Shinzo BANNAI\footnote{Partially supported by JSPS KAKENHI Grant Number JP23K03042} \, and Hiro-o TOKUNAGA\footnote{Partially supported by JSPS KAKENHI Grant Numbers JP23H00081 and JP24K06673}}

\date{\empty}
\maketitle

\begin{abstract}
A pair of plane curves with the same combinatorics is said to be
(a) a Zariski pair if the plane curves have different embedded topology, and
(b) a strong Ziegler pair if their Milnor algebra are not isomorphic.
We show that some examples of  Zariski pairs are also strong Ziegler pairs.\blfootnote{Mathematics Subject Classification: 14H50, 14Q05 13D02, 51H30}
\end{abstract}

\section{Introduction}

Let $S = \CC[x, y, z]$.
Let $\mcB$ be a reduced plane curve in $\PP^2$ given by a homogeneous
polynomial $f_{\mcB}(x, y, z) \in S$. 
Let $\partial_x{f_{\mcB}}, \partial_yf_{\mcB}$, 
$\partial_zf_{\mcB}$ be partial derivatives of $f_{\mcB}$ by $x, y$ and $z$, respectively.
Let $J_{\mcB} := \langle \partial_xf_{\mcB}, \partial_yf_{\mcB}, \partial_zf_{\mcB}\rangle$ be the Jacobian ideal of
$f_{\mcB}$. Let  ${\mathrm {AR}}(\mcB):= \{(a, b, c) \in S^3 \mid 
a\partial_x{f_{\mcB}} + b\partial_yf_{\mcB} + c\partial_zf_{\mcB} =0\}$. Namely $\mathrm{AR}(\mcB)$ is the
graded $S$-module of Jacobian syzygies which has been widely and intensively studied for various curves, in particular, line,  conic-line and conic arrangements (e.g., \cite{bjp2025, cuntz_pokora, dimca_pokora22,  pokora23,  pokora-szemberg23} and references therein). In \cite{bjp2025}, the notion of a \emph{strong Ziegler pair} was defined as follows:
\begin{defin}[cf. \cite{adp2024}]
    Let, $\mcB_1, \mcB_2 \subset \PP^2$ be reduced plane curves. We say that $\mcB_1, \mcB_2$ form a \emph{strong Ziegler pair} if the combinatorics of the curves are equivalent, but the modules ${\mathrm {AR}}(\mcB_1)$ and ${\mathrm {AR}}(\mcB_2)$ are distinct.
\end{defin}
In \cite{bjp2025}, 
an explicit example
of a strong Ziegler pair was given. The example is a pair of conic-line arrangements of degree $8$.

 On the other hand, pairs of curves called \emph{Zariski pairs} were defined by E.~Artal in \cite{artal94} as follows:
\begin{defin}[cf. \cite{artal94}]
    Let, $\mcB_1, \mcB_2 \subset \PP^2$ be reduced plane curves. We say that $\mcB_1, \mcB_2$ form a \emph{Zariski pair} if the combinatorics of the curves are equivalent, but their  embedded topology is different, i.e., there exist no homeomorphisms $h:\PP^2\rightarrow\PP^2$ such that $h(\mcB_1)=\mcB_2$.
\end{defin}

The underlying themes in the study of strong Ziegler pairs and Zariski pairs are the same, and both aim to detect subtle differences in curves having fixed combinatorics. This is highlighted by the fact that the above mentioned example of a strong Ziegler pair given in \cite{bjp2025} was studied by the second author and colleagues in \cite{asstt21} in the context of Zariski pairs. This strongly suggests that Zariski pairs can be good candidates of strong Ziegler pairs. In fact the fist example of a Zariski pair consisting of sextic curves with six cusps turns out to be a strong Ziegler pair (see Section \ref{sec:zariski}). In view of the above facts, the authors think it is worthwhile to see how far the similarity goes by studying known Zariski pairs from the viewpoint of strong Ziegler pairs, especially in the case of conic-line arrangements.

In this note, we give new examples of strong 
Ziegler pairs of conic-line arrangement of degree $7$ and $8$; four examples for degree $7$ and an example for 
degree $8$. All of them have at most nodes, tacnodes and ordinary triple points as singularities.  We hope that these examples will be a small contribution to   Problem~4.12 raised in \cite{cuntz_pokora}.

Now we state our result.  The notation  $\Cmb_{ijk}$ for the  combinatorics of the curves is adopted from \cite{bty25} where the curves were originally studied in terms of Zariski pairs and will be described in Section~\ref{sec:degree7}.
\begin{thm}\label{thm:main}{
\begin{enumerate}
    \item[\rm(i)] There exist strong Ziegler pairs for conic-line arrangement of degree $7$ with combinatorcs $\Cmb_{123}, \Cmb_{124},
    \Cmb_{212}$ and $\Cmb_{224}$ 
    \item[\rm{(ii)}] There exists a strong Ziegler pair of conic-line arrangement of degree $8$ with
    the combinatoric described in \S~\ref{sec:degree8}. 
\end{enumerate}
}    
\end{thm}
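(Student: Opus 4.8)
The proof of Theorem~\ref{thm:main} is by explicit construction: for each of the five prescribed combinatorics I produce two conic-line arrangements realizing it and show that their modules of Jacobian syzygies are non-isomorphic as graded $S$-modules. What makes this possible at all is that, unlike the case of line arrangements, freeness and more generally the graded structure of $\mathrm{AR}(\mcB)$ are \emph{not} determined by the combinatorics for conic-line arrangements, so a Zariski pair is a natural place to look for a strong Ziegler pair. Part~(i) is handled in Section~\ref{sec:degree7} and part~(ii) in Section~\ref{sec:degree8}.

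First, for each combinatorics $\Cmb_{ijk}$ (with $ijk \in \{123,124,212,224\}$) I recall from \cite{bty25} the defining data: the numbers of lines and conics (with total degree $7$), the list of singular points with their types --- here only nodes, tacnodes and ordinary triple points occur --- and the incidences among the components. I then take the two members $\mcB_{ijk}^{(1)},\mcB_{ijk}^{(2)}$ of the associated Zariski pair and write down explicit defining polynomials $f_1,f_2 \in S$, defined over $\mathbb{Q}$ or a small number field. That the two members really share the combinatorics of $\Cmb_{ijk}$ --- meaning that the full incidence lattice coincides, not merely the counts of singular points of each type --- is part of what is established in \cite{bty25}, and I recall the relevant statements. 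The same is done for the degree-$8$ pair in Section~\ref{sec:degree8}.

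Next, for each $f_i$ I compute the Jacobian ideal $J_{\mcB_i} = \langle \partial_x f_i, \partial_y f_i, \partial_z f_i\rangle$ and a minimal homogeneous generating set of the syzygy module $\mathrm{AR}(\mcB_i) \subset S^3$, together with its minimal graded free resolution; equivalently I compute the Hilbert function of the Milnor algebra $S/J_{\mcB_i}$. From this I extract the invariants that separate the two modules: the minimal degree of a relation $\mathrm{mdr}(f_i) = \min\{r : \mathrm{AR}(\mcB_i)_r \neq 0\}$, the number and degrees of the minimal generators, and the full graded Betti table. Because the two members of each pair have equivalent combinatorics, the global Tjurina number $\tau(\mcB_i) = \sum_p \tau_p$ --- which for nodes, tacnodes and ordinary triple points is a combinatorial quantity --- is the same for both, so the separation must occur at the level of the graded structure; I verify that for each pair at least one of these graded invariants genuinely differs (for instance one member is free or nearly free while the other is not, or the Betti tables differ), which forces $\mathrm{AR}(\mcB_i^{(1)}) \not\cong \mathrm{AR}(\mcB_i^{(2)})$ and hence, the combinatorics being equal, exhibits a strong Ziegler pair.

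The main obstacle is twofold. On the one hand, one has to be sure the claimed combinatorial equivalence holds exactly, which is why starting from the Zariski pairs of \cite{bty25}, where it is already proved, is essential. On the other hand --- and this is the real content --- since so many invariants of $\mathrm{AR}(\mcB)$ are fixed once the combinatorics is fixed for these mild singularities, a crude invariant such as $\tau$ is useless and one must descend to the minimal free resolution, whose computation must be carried out symbolically over the exact field of definition, rather than numerically, to be rigorous; I perform these computations with a computer algebra system and record the Betti tables. Finally, I read ``distinct'' in the definition of strong Ziegler pair as non-isomorphic as graded $S$-modules, so that the comparison is legitimized by the fact that the minimal graded free resolution is an isomorphism invariant.
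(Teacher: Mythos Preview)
Your proposal is correct and follows essentially the same approach as the paper: produce explicit defining equations for two arrangements with each prescribed combinatorics (drawn from the Zariski pairs of \cite{bty25,tokunaga14,absst}), then compute and compare the minimal graded free resolutions of the Milnor algebras with a computer algebra system to see that they differ. The paper carries this out exactly, recording the explicit equations and the resulting resolutions; for part~(ii) the degree-$8$ pair is not taken directly from the literature but is built by adjoining a second bitangent line to the $\Cmb_{223}$ configuration, and the paper additionally verifies the Zariski property via splitting types, though this extra step is not needed for the strong Ziegler conclusion itself.
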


Note that we adopt the definition of the \emph{combinatorics} of a curve from 
\cite{abst2023-1, survey, cogo-matei}, which is slightly different and more strict compared to  that of \cite{bjp2025}.  Nevertheless, for the  examples in this note, the plane
curves in each tuple have the same combinatorics in the sense of both \cite{abst2023-1, survey, cogo-matei} and \cite{bjp2025}. Note that all of them form examples of Zariski tuples, i.e., tuples of plane curves with the same combinatorics but different embedded topology.

\section*{Acknowldegements.} The authors thank Professor Takuro Abe and Professor Piotr Pokora for their comments on the first draft of
this article.

\section{Zariski's sextics}\label{sec:zariski}


Let us start with Zariski's example for a Zariski pair (\cite{zariski29, zariski31, zariski37}).

\begin{ex}\label{eg:zariski}{
Let $(\mcB_1, \mcB_2)$ be a pair of sextics in $\PP^2$ as follows:
\begin{enumerate}
    \item[(i)] $\mcB_i$ $(i = 1, 2)$ are irreducible and have $6$ cusps only 
    as their singularities.
    \item[(ii)] For $\mcB_1$, its six cusps  are on a conic, while there exists no such conic for $\mcB_2$.
\end{enumerate}
Then $(\mcB_1, \mcB_2)$ is a Zariski pair.  The differences in the embedded topology is detected through the fundamental gropus $\pi_1(\PP^2\setminus \mcB_i, \ast)$, where $\pi_1(\PP^2\setminus \mcB_1, \ast)\cong \ZZ/2\ZZ\ast \ZZ/3\ZZ$ and $\pi_1(\PP^2\setminus \mcB_2, \ast)\cong \ZZ/6\ZZ$ (see \cite{oka92}).}
\end{ex}

 As for explicit models, the first case is rather easy to find. In fact, we have such a sextic $\mcB_1$ given by
 \[
 (x^2 + y^2 + z^2)^3 + (x^3 + y^3 + z^3)^2 = 0
 \]
 for $\mcB_1$. On the other hand,  the second case is not so obvious and  such an example $\mcB_2$ can be found in \cite{oka92}, which is as follows:

\[
x^{6} - x^{4} y^{2} + \frac{1}{3} x^{2} y^{4} - \frac{1}{27} y^{6} + 2 x^{3} y^{2} z - 2 x^{4} z^{2} - \frac{5}{3} x^{2} y^{2} z^{2} - \frac{2}{9} y^{4} z^{2} + \frac{4}{3} x^{2} z^{4} + \frac{5}{9} y^{2} z^{4} - \frac{8}{27} z^{6} = 0
\]

We can check that $(\mcB_1, \mcB_2)$ satisfy the two conditions in Example~\ref{eg:zariski} easily (by computer system, e.g., Maple). The fundamental groups can also be calculated using SageMath and the package {\tt sirocco} \cite{sirocco} with the command {\tt fundamental\_groups} and we see that
$\pi_1(\PP^2\setminus \mcB_1, \ast) \not\cong \pi_1(\PP^2\setminus \mcB_2, \ast)$.  
Now we compute the minimal free resolutions of $M(\mcB_i)$ $(i = 1, 2)$ by using SageMath and the command {\tt graded\_free\_resolution} applied to
the Jacobian ideals $J_{\mcB_i}$ $(i = 1, 2)$. The results are as follows:

\begin{itemize}
    \item Resolution of $M(\mcB_1)$:
    \[
    0 \to S(-11)\oplus S(-12) \to S(-8)\oplus S(-10)^{\oplus 3} \to S(-5)^{\oplus 3} \to S(0).
    \]
   \item Resolution of $M(\mcB_2)$: 
   \[
   0 \to S(-11)^{\oplus 3} \to S(-9)^{\oplus 2}\oplus S(-10)^{\oplus 3} \to S(-5)^{\oplus 3} \to S(0).
   \]
\end{itemize}

 The above result shows 

 \begin{prop}\label{prop:zariski}{The Zariski pair in Example~\ref{eg:zariski} is a strong Ziegler pair.
 }   
 \end{prop}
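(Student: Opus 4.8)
The plan is to verify the claim directly from the computation that has already been carried out in the preceding paragraph. Recall that two reduced plane curves form a strong Ziegler pair precisely when their combinatorics agree but the graded $S$-modules $\mathrm{AR}(\mcB_i)$ — equivalently the Milnor (Jacobian) algebras $M(\mcB_i) = S/J_{\mcB_i}$ up to graded isomorphism — differ. So the proof has exactly two halves: first, that $\mcB_1$ and $\mcB_2$ in Example~\ref{eg:zariski} have the same combinatorics; second, that $M(\mcB_1) \not\cong M(\mcB_2)$ as graded $S$-modules.

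For the first half, I would simply invoke the defining conditions (i) and (ii) of Example~\ref{eg:zariski}: both sextics are irreducible with exactly six cusps ($A_2$ singularities) and no other singular points, and the two explicit models exhibited above can be checked — by a computer algebra system, as already noted in the text — to satisfy this. Since the combinatorics of an irreducible curve with six cusps and nothing else is completely determined by that data (there are no incidences among singular points or components to record beyond the list of local types and the single irreducible component), $\mcB_1$ and $\mcB_2$ have equivalent combinatorics. This is the easy half and requires no new work beyond what Example~\ref{eg:zariski} already records.

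For the second half, I would read off the two minimal graded free resolutions of $M(\mcB_1)$ and $M(\mcB_2)$ displayed just above the proposition. A minimal free resolution is unique up to isomorphism of complexes, so its graded Betti numbers are invariants of the module. The resolution of $M(\mcB_1)$ has last term $S(-11)\oplus S(-12)$ and middle term $S(-8)\oplus S(-10)^{\oplus 3}$, whereas that of $M(\mcB_2)$ has last term $S(-11)^{\oplus 3}$ and middle term $S(-9)^{\oplus 2}\oplus S(-10)^{\oplus 3}$. These twist-degree multisets are visibly different (e.g. $M(\mcB_1)$ has a generator of its second syzygy in degree $12$ and none in degree $9$ at the first syzygy level, while $M(\mcB_2)$ is the reverse). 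Hence the graded Betti numbers differ, so $M(\mcB_1) \not\cong M(\mcB_2)$ as graded $S$-modules, and therefore $\mathrm{AR}(\mcB_1) \ne \mathrm{AR}(\mcB_2)$. Combined with the first half, $(\mcB_1, \mcB_2)$ is a strong Ziegler pair, which is exactly the assertion of Proposition~\ref{prop:zariski}.

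The only genuine obstacle is making sure the displayed resolutions are actually correct and minimal — i.e. that the SageMath computation with \texttt{graded\_free\_resolution} applied to $J_{\mcB_i}$ returns the minimal resolution and that no cancellation of consecutive identical twists has been overlooked. Since the twist multisets above share no common summand between adjacent homological degrees in either resolution, minimality is automatic, so once the machine output is trusted there is nothing further to do. I do not anticipate needing any structural result about $\mathrm{AR}(\mcB)$ beyond the standard fact that it is determined by $M(\mcB)$ (indeed $\mathrm{AR}(\mcB)$ is, up to a shift, the first syzygy module appearing in the resolution of $M(\mcB)$), so that distinct resolutions force distinct $\mathrm{AR}$-modules.
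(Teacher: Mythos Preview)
Your proposal is correct and matches the paper's own argument: the paper simply records the two minimal graded free resolutions of $M(\mcB_1)$ and $M(\mcB_2)$ (computed via \texttt{graded\_free\_resolution}) and states that ``the above result shows'' the proposition, exactly as you do by comparing Betti numbers. Your added remarks on why the combinatorics agree and why minimality is automatic are sound elaborations of what the paper leaves implicit.
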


 \begin{rem}
        In \cite{dimca_pokora23}, it was shown that any maximizing plane curve with ADE singularities are free. On the other hand,
    in \cite{artal-tokunaga00}, examples of Zariski pairs $(\mcB_1, \mcB_2)$ for maximizing sextics with singularities $A_{17} + 2A_1$ and $A_{11} + A_5 + 3A_1$ were given and the outputs by {\tt gareded\_free\_resolution} are all the same
    for curves given there, but we do not know if $\mathrm{AR}(\mcB_i)$ are different from each other.
    Note that
    there were miscalculation in \cite[Example 1.1, $B_2$ in Remark 2.1]{artal-tokunaga00}. Both of the cubics
    $C_1^{(2)}$ and $C_2^{(2)}$ are smooth and
    $C_1^{(2)} + C_2^{(2)}$ is not maximizing.
    In order to obtain the desired curve, we first
    consider the pencil of cubics $C_{\lambda} = C_1^{(2)} + \lambda C_2^{(2)}, \lambda \in \CC$ and choose
    $C_{\frac{97 + 63\sqrt{-3}}{146}}$ and
     $C_{\frac{35 + 45\sqrt{-3}}{73}}$. Then
     $C_{\frac{97 + 63\sqrt{-3}}{146}} +C_{\frac{35 + 45\sqrt{-3}}{73}}$ is a maximizing sextic with prescribed singularites, i.e., $A_{17} + 2A_1$.
     
 \end{rem}

\section{Proof of Theorem~\ref{thm:main} (i)}\label{sec:degree7}
%
%

In this section, we give examples of strong Ziegler pairs of conic-line arrangements of degree 7, based on the Zariski pairs that were studied in \cite{bty25}. As for the notation and terminologies
for our conic-line arrangements, we
use those given in \cite{bty25}.

In \cite{bty25} conic-line arrangements of
degree $7$ were studied. Combined with known results, the existence of Zariski pairs of conic-line arrangements with the following combinatorics was proved.

1. $\Cmb_{123}$. This consists of two conics $C$ and $D$ and three lines $L_1, L_2$ and $M$ as follows:
\begin{enumerate}
    \item[(i)] $C\pitchfork (L_1 + L_2)$, $L_1\cap L_2 \cap C = \emptyset$ and $D\pitchfork M$.
    \item[(ii)] $D$ is tangent to $C$ at one point and to $L_2$.
    \item[(iii)] $C\cap D\cap L_1$ consists of two points.
    \item[(iv)] $M$ passes through
    $L_1\cap L_2$ and tangent to $C$.
\end{enumerate}

2. $\Cmb_{124}$. This consists of two conics $C$ and $D$ and three lines $L_1, L_2$ and $M$ as follows:
\begin{enumerate}
    \item[(i)] $C\pitchfork (L_1 + L_2)$, $L_1\cap L_2 \cap C = \emptyset$ and $D\pitchfork M$.
    \item[(ii)] $D$ is tangent to $C$ at two points and to $L_1 + L_2$
    at two point in $(L_1 + L_2) \setminus C$.
    \item[(iii)] $M$ passes two points, one is in $L_1\cap C$ and the other in $L_2\cap C$.
\end{enumerate}

3. $\Cmb_{212}$. This consists of two conics $C_1$ and $C_2$ and three lines $M_1, M_2$ and $M_3$ as follows:
\begin{enumerate}
    \item[(i)] $C_1\pitchfork C_2$ and
    $M_1, M_2$ and $M_2$ are not concurrent.
    \item[(ii)] $M_1 \cap C_1 \cap C_2$
    consists of two points and $M_2$ and $M3$ are bitangent to $C_1 + C_2$.
\end{enumerate}

4. $\Cmb_{223}$. This consists of three conics $C_1, C_2$ and $D$ and a line $M$ as follows:
\begin{enumerate}
    \item[(i)] $C_1\pitchfork C_2$.
    \item[(ii)] $D$ passes through two points of $C_1\cap C_2$ and is tangent to both $C_1, C_2$.
    \item[(iii)] $M$
    is tangent to both $C_1, C_2$ and $M \pitchfork D$.
\end{enumerate}

5. $\Cmb_{224}$. This consists of three conics $C_1, C_2$ and $D$ and a line $M$ as follows:
\begin{enumerate}
    \item[(i)] $C_1\pitchfork C_2$ and
    $D$ inscribes $C_1 + C_2$ at four points.
    \item[(ii)] $M \cap C_1 \cap C_2$
    consists of two points and $M \pitchfork D$.
\end{enumerate}

\begin{figure}[H]
\centering
\begin{minipage}[b]{0.32\columnwidth}
    \centering
    \includegraphics[bb=00 00 700 700, width=4cm]{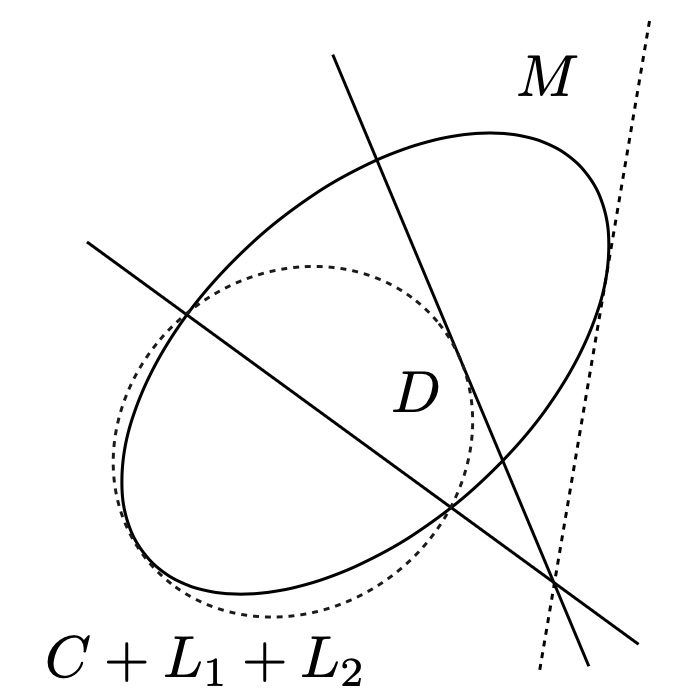}
    \caption{$\Cmb_{123}$}
\end{minipage}
\begin{minipage}[b]{0.32\columnwidth}
    \centering
    \includegraphics[bb=00 00 700 700, width=4cm]{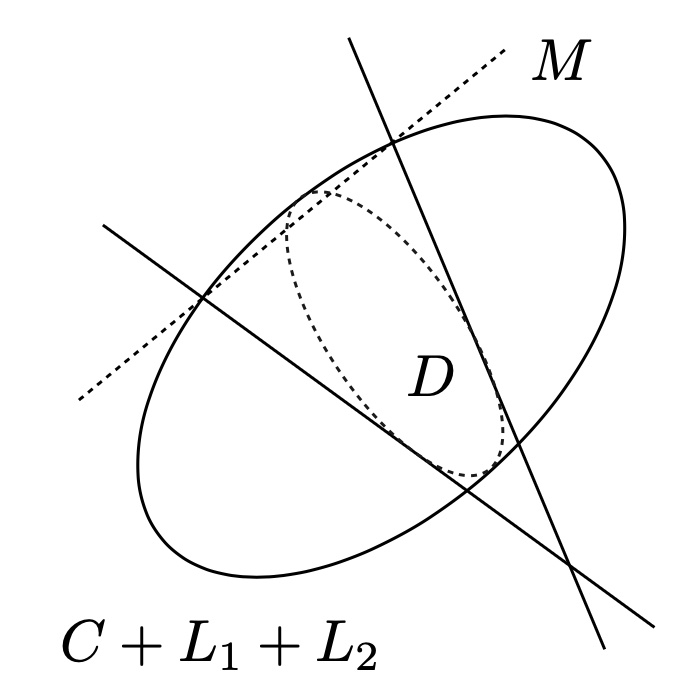}
    \caption{$\Cmb_{124}$}
\end{minipage}
\end{figure} 
\begin{figure}[H]
\centering
\begin{minipage}[b]{0.32\columnwidth}
    \centering
    \includegraphics[bb=00 00 700 700, width=4cm]{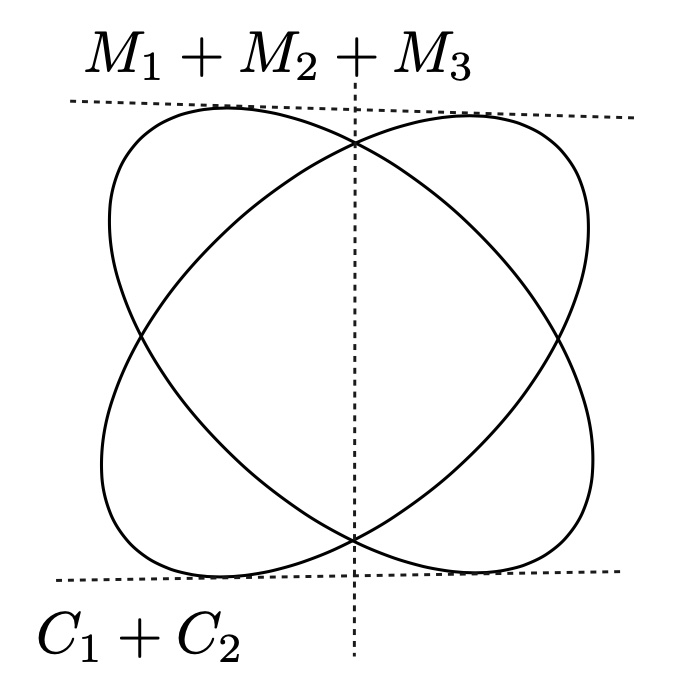}
    \caption{$\Cmb_{212}$}
\end{minipage}
\begin{minipage}[b]{0.32\columnwidth}
    \centering
    \includegraphics[bb=00 00 700 700, width=4cm]{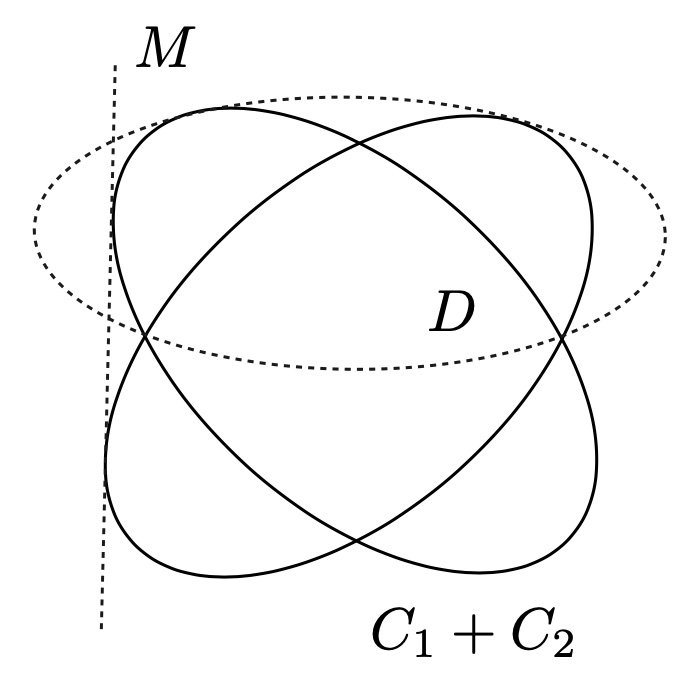}
    \caption{$\Cmb_{223}$}
\end{minipage}
\begin{minipage}[b]{0.32\columnwidth}
    \centering
    \includegraphics[bb=00 00 700 700, width=4cm]{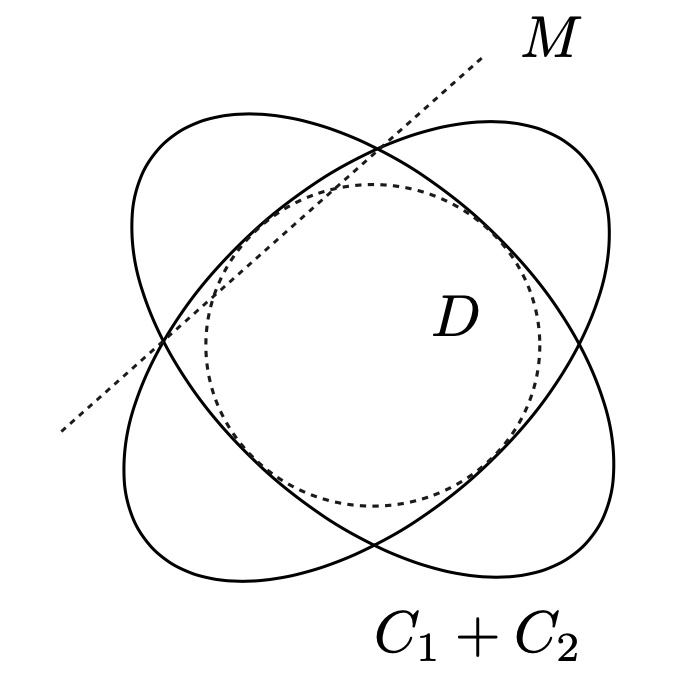}
    \caption{$\Cmb_{224}$}
\end{minipage}
\end{figure} 

As we have seen in \cite{tokunaga14, absst, bty25},  there exist Zariski pairs for
all of the five combinatorics. We will now give an explicit example of a
Zariski pair for each case, and check if they  form a strong  Ziegler pair.

\begin{ex}\label{ex:cmb123} $\Cmb_{123}$ (\cite[Example 4.7]{bty25}). 
Consider conics and lines as follows:
\[
C: -x^{2} + y z = 0, \quad L_1: 3x + y + 2z = 0, \quad L_2: -3x + y + 2z = 0,
\]
and
\[\begin{array}{c}
D: \left(12 b - 18\right) x^{2} + \left(-36 b + 51\right) x z + y z + \left(24 b - 34\right) z^{2} = 0, \\
M_{1}: 2bx + y + 2z = 0, \quad  M_{2}: -2bx + y + 2z = 0, \quad \text{where $\quad b = \sqrt{2}$}.
\end{array}
\]
Now put $\mcB_{1,1}= C + L_1 + L_2 + D + M_{1}$ and $\mcB_{1,2}= C + L_1 + L_2 + D + M_{2}$.
\end{ex}

\begin{ex}\label{ex:cmb124} $\Cmb_{124}$(\cite[Example 5.2]{tokunaga14}).  Let  $C, L_1$ and $L_2$ be those in Example~\ref{ex:cmb123}. Let 
\[
D: -\frac{9}{8} x^{2} + y z = 0, \quad M_{1}: -x + y - 2z = 0, \quad M_{2}: y - z = 0.
\]
Now put $\mcB_{2,1}= C + L_1 + L_2 + D + M_{1}$ and $\mcB_{2,2}= C + L_1 + L_2 + D + M_{2}$.    
\end{ex}

\begin{ex}\label{ex:cmb212} $\Cmb_{212}$(\cite[Example 4.16]{bty25}).  Let $C_1$ and $C_2$
be conics given by

\[
C_1:x^{2} + x y + y^{2} - \frac{27}{4} z^{2} = 0, \quad  C_2: 676 x^{2} + 764 x y + 676 y^{2} - 4563 z^{2} = 0.
\]
Let $M_0, M_1, M_2$ and $M_3$ be four lines given by
\[
M_0: y = 0, \,\, M_1: 15x + 8y - 39z = 0, \,\, M_2: 15x + 8y + 39z = 0, \,\, M_3: 8x + 15y - 39z = 0.
\]
Now put $\mcB_{3,1}= C_1 + C_2 + M_0 + M_1 + M_{2}$ and $\mcB_{3,2}= C_1 + 
C_2 + M_0 + M_1 + M_3$.   
    
\end{ex}

\begin{ex}\label{ex:cmb223}(\cite[Introduction]{absst}) Let $C_1$, $C_2$ and $D$
be conics given by
\[
C_1: -x^{2} + y z, \quad C_2: -10 x y + y^{2} + 25 y z - 36 z^{2}, \quad D: -\frac{5}{4} x^{2} + 2 x z + y z - 3 z^{2},
\]

Let $M_1, M_2, M_3$ and $M_4$ be four lines given by
\[
M_1: -\frac{32}{5} x + y + \frac{256}{25} z = 0,\, 
M_2: y = 0,
\]
\[M_3: -10 x + y + 25 z = 0, \, 
M_4: -\frac{18}{5} x + y + \frac{81}{25} z = 0.
\]
Now put $\mcB_{4,i}= C_1 + C_2 + D + M_i$ 
($i = 1, 2, 3, 4$).     
\end{ex}

\begin{ex}\label{ex:cmb224}(\cite[Example 5.2]{tokunaga14}) 
Let $C_1, C_2$ and $D$ 
be conics given by
\[
C_1: -x^{2} + y z + 2 z^{2} = 0, \quad  C_2: x^{2} + y^{2} - 2 y z - 4 z^{2} = 0, \quad D: -\frac{1}{2} x^{2} + y z + 2 z^{2} = 0.
\]
Let $M_1$ and $M_2$ be lines given by
\[
M_1: -x + y = 0, \quad M_2: -3x + y + 4z = 0.
\]
 Now put $\mcB_{5,1}= C_1 + C_2 + D + M_1$ and $\mcB_{5,2}= C_1 + 
C_2 + D + M_2$.     
\end{ex}

\begin{prop}\label{prop:ziegler}{Let $(\mcB_{i,1}, \mcB_{i,2})$ $(i = 1, 2, 3, 5)$ be pairs of conic-line arrangements as above. Then 
$(\mcB_{i, 1}, \mcB_{i,2})$ $(i = 1, 2, 3, 5)$ form strong Ziegler pairs.
}
\end{prop}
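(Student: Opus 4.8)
The plan is to verify each assertion computationally in the same spirit as Proposition~\ref{prop:zariski}. For each pair $(\mcB_{i,1},\mcB_{i,2})$ with $i\in\{1,2,3,5\}$, I would first confirm that both members realize the claimed combinatorics $\Cmb_{123}$, $\Cmb_{124}$, $\Cmb_{212}$, $\Cmb_{224}$ respectively: this amounts to checking, for the explicit equations given in Examples~\ref{ex:cmb123}, \ref{ex:cmb124}, \ref{ex:cmb212}, \ref{ex:cmb224}, that all the prescribed incidences (transversal intersections $\pitchfork$, tangencies, triple points, concurrency or non-concurrency of lines) hold and that there are no unexpected extra singularities — a finite list of resultant and discriminant computations that can be discharged in Maple or SageMath. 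Since these pairs are already recorded as Zariski pairs in \cite{bty25, tokunaga14}, this step is essentially a citation plus a sanity check.

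The core of the proof is the computation of the graded $S$-module $\mathrm{AR}(\mcB_{i,j})$, equivalently the minimal free resolution of the Milnor algebra $M(\mcB_{i,j}) = S/J_{\mcB_{i,j}}$, for each of the eight curves. Concretely, I would form the Jacobian ideal $J_{\mcB_{i,j}} = \langle \partial_x f, \partial_y f, \partial_z f\rangle$ from the degree-$7$ defining polynomial $f = f_{\mcB_{i,j}}$ and apply {\tt graded\_free\_resolution} in SageMath, exactly as in Section~\ref{sec:zariski}. This yields a resolution of the shape
\[
0 \to F_3 \to F_2 \to S(-6)^{\oplus 3} \to S(0),
\]
where $F_2, F_3$ are sums of twists $S(-k)$, and $\mathrm{AR}(\mcB_{i,j})$ is read off from the syzygy module (the image of $F_2 \to S(-6)^{\oplus 3}$, shifted). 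For each $i\in\{1,2,3,5\}$ I would then exhibit the two resolutions side by side and observe that the twists appearing in $F_2$ and $F_3$ differ between $\mcB_{i,1}$ and $\mcB_{i,2}$ — e.g.\ one member is free (or has a shorter resolution, or has a different set of generator degrees for $\mathrm{AR}$) while the other is not. This discrepancy in graded Betti numbers immediately gives $\mathrm{AR}(\mcB_{i,1}) \not\cong \mathrm{AR}(\mcB_{i,2})$ as graded $S$-modules, which is precisely the definition of a strong Ziegler pair, and combined with the first step shows $(\mcB_{i,1},\mcB_{i,2})$ is a strong Ziegler pair.

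The main obstacle is arithmetic rather than conceptual: the curves involve irrational coefficients (notably $b=\sqrt 2$ in Example~\ref{ex:cmb123}), so the resolution computation must be carried out over a number field such as $\QQ(\sqrt 2)$ rather than over $\QQ$, and one must be careful that the Gröbner-basis/resolution routines behave correctly over that field — choosing a monomial order and verifying that the computed Betti table is genuinely minimal. For the larger-degree or more singular examples the polynomials are unwieldy by hand, so the argument is unavoidably computer-assisted; the write-up should therefore simply present the two resolutions for each pair and note that the differing twists force the modules to be non-isomorphic. Case $i=4$ (combinatorics $\Cmb_{223}$) is deliberately excluded from the statement, presumably because there the resolutions coincide — so I would not attempt to include it, and indeed one should double-check that $\mcB_{4,i}$ are genuinely not distinguished this way, consistent with the remark that {\tt graded\_free\_resolution} can fail to separate some Zariski pairs.
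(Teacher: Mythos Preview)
Your proposal is correct and matches the paper's own proof essentially line for line: the paper simply notes that the members of each pair share the stated combinatorics, runs {\tt graded\_free\_resolution} on the Jacobian ideals, and displays the resulting resolutions (which coincide across $i=1,2,3$ for each of $\mcB_{i,1}$ and $\mcB_{i,2}$, with a separate shape for $i=5$), observing that the Betti numbers differ between $\mcB_{i,1}$ and $\mcB_{i,2}$. Your anticipation that case $i=4$ is omitted because the resolutions coincide is also exactly what the paper records in the subsequent remark.
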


\proof For each pair $(\mcB_{i, 1}, \mcB_{i, 2})$, $\mcB_{i,1}$ and 
$\mcB_{i, 2}$ have the same combinatorics. Now we compute the minimal
resolution of the associated Milnor algebras for each case by using
the SageMath command {\tt graded\_free\_resolution}. Then our statement
follows from the following:
\begin{itemize}
\item Resolution for $M(\mcB_{i,1})$ ($i = 1, 2,3$) :
\[
0 \to S(-12) \to S(-9)\oplus S(-10) \oplus S(-11) \to S(-6)^{\oplus 3} \to S(0).
\]
\item Resolution for $M(\mcB_{i,2})$ ($i = 1, 2, 3$) :
\[
0 \to S(-11)^{\oplus 2} \to  S(-10)^{\oplus 4} \to S(-6)^{\oplus 3} \to S(0).
\]
\item Resolution for $M(\mcB_{5,1})$ :
\[
0 \to S(-13) \to S(-9)\oplus S(-10) \oplus S(-12) \to S(-6)^{\oplus 3} \to S(0).
\]
\item Resolution for $M(\mcB_{5,2})$ :
\[
0 \to S(-11)\oplus S(-12) \to  S(-10)^{\oplus 3}\oplus S(-11) \to S(-6)^{\oplus 3} \to S(0).
\]
\end{itemize}
\endproof

\begin{rem}\begin{enumerate}
    \item[(i)] The example of a strong Ziegler pair given in \cite{bjp2025} is studied in \cite{asstt21} from
    the view point of Zariski pairs. In fact, it gives a Zariski pair, which
    can be regarded as a degeneration of Zariski pairs of conic arrangements
    studied by Namba and Tsuchihashi in \cite{Namba-Tsuchihashi}. With
    {\tt graded\_free\_resolution}, we can also check that Namba-Tsuchihashi's
    Zariski pair for conic arrangements gives a strong Ziegler pair.

    \item[(ii)] For $M(\mcB_{4,i})$ ($i = 1, 2, 3, 4$), we have resolutions:
\[
0 \to S(-12) \to S(-10)^{\oplus 3} \to S(-6)^{\oplus 3} \to S(0)
\]
for all $i = 1, 2, 3, 4$. The pair $(\mcB_{4,i}, \mcB_{4,j})$
$\{i,j\} =\{1, 3\}, \{1, 4\}, \{2, 3\}, \{2, 4\}$
are known to be  Zariski pairs by \cite[Section4.2]{absst} but {\tt graded\_free\_resolution} outputs the same minimal resolutions for plane curves in the example.  We have not checked if ${\mathrm{AR}}(\mcB_i)$ $(i=1, 2)$ are distinct as modules. Also, this pair is exceptional among the five cases of degree 7 that we have presented in the sense that this pair is the only one whose fundamental groups $\pi_1(\PP^2\setminus \mcB_i, \ast)$ $(i=1, 2)$ are abelian and isomorphic. The other four cases have non-abelian and non-isomorphic fundamental groups (see \cite{bty25}). We will revisit this pair in the case of degree 8.

\item[(iii)]
In \cite{adp2024}, a new hierarchy for projective plane curves \lq{\it type $t(\mcB)$}' is introduced (\cite[Definition 1.2]{adp2024}). For our examples, 
$t(\mcB_{i, 1}) = 1, t(\mcB_{i,2}) = 2$ ($i = 1, 2, 3, 5$).

\end{enumerate}
    
\end{rem}

\section{Proof of Theorem~\ref{thm:main} (ii)}\label{sec:degree8}
In this section, we will give an example of a Zariski and strong Ziegler pairs for conic-line arrangements of degree $8$.
Before we go on to our example, let us explain \lq Splitting type\rq\, for
a reduced plane curve of the form $\mcB + \mcC$, where (i) both $\mcB$ and $\mcC$ are reduced, (ii) $\mcB$ and $\mcC$ have no common component and
(iii) $\deg \mcB$ is even.

\subsection{Splitting type}

 The notion of splitting type arose from the notion of \emph{splitting curves} described below, which can be considered as the simplest example of a splitting invariant. 
Let $\mcB$ be a plane curve of even degree and let 
$f'_{\mcB} : S'_{\mcB} \to \PP^2$ be the double cover of $\PP^2$ with branch locus
$\mcB$. Let $\mu_{\mcB}: S_{\mcB} \to S'_{\mcB}$ be the canonical resolution fitting into 
the following commutative diagram:
\[
\begin{CD}
S'_{\mcB} @<{\mu_{\mcB}}<< S_{\mcB} \\ 
@V{f'_{\mcB}}VV                 @VV{f_{\mcB}}V  \\ 
\PP^2@<<{q}< \widehat{\PP^2} \\ 
\end{CD}
\]
where  $q$ is a composition of a finite 
 number of blowing-ups so that the branch locus becomes smooth (See \cite{horikawa}
 for the canonical resolution) and $f_{\mcB}: S_{\mcB} \to \widehat{\PP^2}$ is the induced
 double cover.

Put $\tilde{f}_{\mcB} = f'_{\mcB}\circ \mu_{\mcB} = q\circ f_{\mcB}$.
Let $C$ be an irreducible plane curve not contained in $\mcB$. The pull-back $\tilde{f}_{\mcB}^*C$ is of the form either
\begin{center}
  (a) $C^+ + C^- + E$ \quad or \quad (b) $\tilde C + E$
\end{center}
where $C^\pm$ and $\tilde C$ are irreducible with $\tilde{f}_{\mcB}(C^{\pm}) = 
\tilde{f}_{\mcB}(\tilde C)= C$ 
and $\Supp(E)$ is contained in the exceptional
set of $\mu_{\mcB}$.

\begin{defin}\label{def:splitting}{\rm
We say that an irreducible plane curve $C$ is a \textit{splitting curve} with respect to $B$
if the case (a) above holds for~$C$. 
}
\end{defin}

We now give the definition of splitting types.

\begin{defin}[cf. {\cite{bannai16, bst24}}]\label{def:splitting_type}
Let $f'_{B}:S'_B \to\PP^2$ be a double cover branched along a reduced 
plane curve $B$, and
let $D_1, D_2\subset\PP^2$ be two irreducible curves such that ${f'}_{B}^\ast D_i$ are reducible  with irreducible decomposition ${f'}_{B}^\ast D_i=D_i^++D_i^-$.
For integers $m_1\leq m_2$, we say that the triple $(D_1, D_2;B)$ has the {\it splitting type} $(m_1, m_2)$ if for a suitable choice of labels $D_1^+\bar{\cdot} D_2^+=m_1$ and $D_1^+\overline{\cdot} D_2^-=m_2$. Here $\overline{\cdot}$ denotes
the sum of local intersection multiplicities at points over $\PP^2\setminus B$. We abuse notation and use $(D_1, D_2;B)$ to denote both the triple and its splitting type as follows:
\[
(D_1, D_2;B) = (m_1, m_2).
\]

\end{defin}

\begin{rem}{\rm In the study of vector bundles $E$ of rank $r$ over $\PP^2$, the terminology \rq splitting type\rq\, is also used. When we restrict
$E$ to a line, $E$ splits into a direct sum of line bundles
$E \cong \oplus_{i= 1}^r \mcO(a_i)$. In this context \lq\lq the splitting type of $E$'' refers to the sequence of integers 
$(a_1, \ldots, a_r)$, which is different from the one in Definition~\ref{def:splitting_type}.
}
\end{rem}
The following proposition enables us to distinguish the embedded topology of plane curves by the splitting type, which also holds under the slightly modified setting as above.

\begin{prop}[cf. {\cite[Proposition~2.5]{bst24}}]\label{prop:splitting_type}
Let $f'_{B_i} :S'_{B_i}\to\PP^2$ $(i=1,2)$ be two double covers branched along plane curves $B_i$, respectively.
For each $i=1,2$, let $D_{i1}$ and $D_{i2}$ be two irreducible plane curves such that ${f'}_{B_i}^\ast D_{ij}$ are reducible with irreducible decomposition
${f'}_{B_i}^\ast D_{ij}=D_{ij}^++D_{ij}^-$.
Suppose that  $D_{i1}$ and $D_{i2}$ intersect transversely over $\PP^2\setminus B_i$, and that $(D_{11},D_{12};B_1)$ and $(D_{21}, D_{22};B_2)$ have distinct splitting types.
Then there is no homeomorphism $h:\PP^2\to\PP^2$ such that $h(B_1)=B_2$ and $\{h(D_{11}), h(D_{12})\}=\{D_{21}, D_{22}\}$.
\end{prop}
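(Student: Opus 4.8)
The plan is to deduce this from the basic functoriality of the double cover construction together with the topological invariance of intersection data read off the covers. First I would observe that a homeomorphism $h:\PP^2\to\PP^2$ with $h(B_1)=B_2$ lifts, after restricting to complements, to a homeomorphism $\widetilde h: S'_{B_1}\setminus {f'}_{B_1}^{-1}(B_1) \to S'_{B_2}\setminus {f'}_{B_2}^{-1}(B_2)$ of the unramified parts of the double covers; indeed, over $\PP^2\setminus B_i$ the map $f'_{B_i}$ is an unramified double cover, hence classified by an element of $H^1(\PP^2\setminus B_i;\ZZ/2\ZZ)$, and since $h$ carries $B_1$ to $B_2$ it carries the branch data to the branch data, so the covers correspond under $h$. (If one wants $\widetilde h$ on all of $S'_{B_i}$ rather than just the unramified loci, one uses that a homeomorphism of the punctured base extends over the ramification by the local structure of the double cover, but for the intersection-number argument the unramified part suffices.)

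Next, assuming additionally that $\{h(D_{11}),h(D_{12})\}=\{D_{21},D_{22}\}$, I would track what happens to the irreducible components $D_{ij}^{\pm}$. The curve $h(D_{1j})$ equals some $D_{2\sigma(j)}$, and its preimage ${f'}_{B_2}^{\ast}D_{2\sigma(j)} = D_{2\sigma(j)}^+ + D_{2\sigma(j)}^-$; on the other hand $\widetilde h$ maps ${f'}_{B_1}^{\ast}D_{1j}$ (minus the ramification) onto ${f'}_{B_2}^{\ast}D_{2\sigma(j)}$ (minus the ramification), and since $\widetilde h$ is a homeomorphism it must carry the two irreducible pieces $D_{1j}^{\pm}$ to the two irreducible pieces $D_{2\sigma(j)}^{\pm}$, in some order. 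Thus there are choices of labels and a bijection so that $\widetilde h(D_{1j}^{\epsilon}) = D_{2\sigma(j)}^{\epsilon'}$ away from $B_i$. Because $\widetilde h$ is a homeomorphism of the spaces over $\PP^2\setminus B_1$ resp.\ $\PP^2\setminus B_2$ and $D_{i1},D_{i2}$ meet transversely there (so all local intersection multiplicities over $\PP^2\setminus B_i$ are $1$ and the relevant intersection number is literally the cardinality of a finite set of transverse points), the number $D_{11}^+ \,\overline{\cdot}\, D_{12}^{\pm}$ is carried bijectively to $D_{21}^{?}\,\overline{\cdot}\, D_{22}^{?}$. Hence the unordered pair $\{\,D_{i1}^+\,\overline{\cdot}\,D_{i2}^+,\ D_{i1}^+\,\overline{\cdot}\,D_{i2}^-\,\}$ — which is exactly the splitting type $(m_1,m_2)$ with $m_1\le m_2$ — is the same for $i=1$ and $i=2$. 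This contradicts the hypothesis that the two triples have distinct splitting types, so no such $h$ exists.

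The main obstacle, and the point that needs genuine care rather than formal nonsense, is the bookkeeping of the $\pm$ labels under $\widetilde h$: a priori $\widetilde h$ could swap $D_{1j}^+\leftrightarrow D_{1j}^-$, and independently the permutation $\sigma$ reshuffles the two curves $D_{i1},D_{i2}$, so one must check that whatever combination of swaps occurs, the induced map on the two-element set $\{D_{i1}^+\cdot D_{i2}^+,\ D_{i1}^+\cdot D_{i2}^-\}$ is still a bijection of that set onto the corresponding set for $B_2$. This is where the symmetry built into Definition~\ref{def:splitting_type} — taking the splitting type to be the pair with $m_1\le m_2$, independent of the choice of labels — does the work: any relabeling either fixes the pair $\{m_1,m_2\}$ or at worst permutes its entries, so the sorted pair is a genuine invariant. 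A secondary technical point is to make sure the deck transformation of $f'_{B_i}$ intertwines correctly with $\widetilde h$ so that the two sheets really do get matched up to the two sheets; this follows because $\widetilde h$ covers $h$ and $h$ respects the branch loci, so $\widetilde h$ either commutes with the deck involutions or conjugates one to the other — in both cases it permutes $\{D_{ij}^+, D_{ij}^-\}$. Since Proposition~\ref{prop:splitting_type} is cited as a known result (it is \cite[Proposition~2.5]{bst24} in the slightly modified setting), the write-up can reasonably be brief: state the lift, state the label-tracking, invoke topological invariance of the transverse intersection count on the complements, and conclude.
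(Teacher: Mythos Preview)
The paper does not actually prove this proposition; it simply writes ``For a proof see \cite{bst24}.'' Your sketch is correct and is essentially the standard argument one finds in that reference: lift $h$ to a homeomorphism $\widetilde h$ of the unramified parts of the two double covers, observe that $\widetilde h$ must permute the irreducible pieces $D_{ij}^{\pm}$, and then use that transverse intersection points over $\PP^2\setminus B_i$ are carried bijectively by a homeomorphism, so the unordered pair $\{m_1,m_2\}$ is preserved.

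The one point that deserves slightly more care than you give it is why the \emph{particular} double cover $S'_{B_i}$ is the one matched up by $h$. When $B_i$ has several irreducible components there is more than one connected unramified double cover of $\PP^2\setminus B_i$; the algebraic cover $S'_{B_i}$ (coming from $w^2=f_{B_i}$) corresponds to the character on $H_1(\PP^2\setminus B_i;\ZZ)$ sending every meridian to the nontrivial element of $\ZZ/2\ZZ$, and a homeomorphism $h$ with $h(B_1)=B_2$ sends meridians to meridians, so the two characters correspond under $h_*$. Your phrase ``carries the branch data to the branch data'' is pointing at exactly this, but spelling it out removes any ambiguity. With that, together with the deck-involution symmetry $D_{i1}^+\,\overline{\cdot}\,D_{i2}^+ = D_{i1}^-\,\overline{\cdot}\,D_{i2}^-$ (which is what makes the sorted pair label-independent, as you note), the argument is complete.
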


For a proof see \cite{bst24}. 

\begin{rem}
    The splitting type has been used in \cite{absst, bannai16, BKMT22, bmsty-poncelets, bannai-tokunaga17} to distinguish the embedded topology of the curves considered there. 
\end{rem}

Proposition \ref{prop:splitting_type} can be generalized to cases involving a larger number of splitting curves. For our purpose, the following form for three splitting curves will be used later.

\begin{cor}\label{cor:splitting_type} Let $\mcB_i$ $(i =1, 2)$ as in Proposition~\ref{prop:splitting_type}. 
For each $i=1,2$, let $D_{i1}$,  $D_{i2}$ and $D_{i3}$ be three irreducible plane curves such that ${f'}_{B_i}^\ast D_{ij}$ are reducible with irreducible decomposition
${f'}_{B_i}^\ast D_{ij}=D_{ij}^++D_{ij}^-$.
Suppose that  $D_{ij}$ and $D_{ik}$ $(1 \le j < k \le 3)$ intersect transversely over $\PP^2\setminus B_i$, and that the sets of splitting types 
\begin{eqnarray*}
 && \{(D_{11},D_{12};B_1),(D_{11},D_{13};B_1), (D_{12},D_{12};B_1)\}, \\
 &&
 \{(D_{21},D_{22};B_2),(D_{21},D_{23};B_2), (D_{22},D_{23};B_1)\}.
\end{eqnarray*}
are distinct, i.e., they are distinct as multi-sets.
Then there is no homeomorphism $h:\PP^2\to\PP^2$ such that $h(B_1)=B_2$ and $\{h(D_{11}), h(D_{12}), h(D_{13})\}=\{D_{21}, D_{22}, D_{23}\}$.
\end{cor}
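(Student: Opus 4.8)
The plan is to reduce Corollary~\ref{cor:splitting_type} to Proposition~\ref{prop:splitting_type} by a counting/pigeonhole argument applied to the unordered triples of splitting types. Suppose, for contradiction, that a homeomorphism $h:\PP^2\to\PP^2$ exists with $h(B_1)=B_2$ and $\{h(D_{11}),h(D_{12}),h(D_{13})\}=\{D_{21},D_{22},D_{23}\}$. Then $h$ induces a bijection $\sigma$ of the index set $\{1,2,3\}$ with $h(D_{1j})=D_{2\sigma(j)}$ for each $j$. The first step is to observe that for each pair $\{j,k\}$ with $1\le j<k\le 3$, the restriction of $h$ (together with the induced identification of the double covers $S'_{B_1}\to S'_{B_2}$, which exists since $h$ respects the branch loci) carries the triple $(D_{1j},D_{1k};B_1)$ to the triple $(D_{2\sigma(j)},D_{2\sigma(k)};B_2)$; hence, by the homeomorphism-invariance established in the proof of Proposition~\ref{prop:splitting_type}, these two triples must have the same splitting type. (Here I am using that the transversality hypotheses over $\PP^2\setminus B_i$ are preserved by $h$, so the relevant intersection numbers $\overline{\cdot}$ are topological and matched.)

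The second step is purely combinatorial: the pairs $\{j,k\}\subset\{1,2,3\}$ are in canonical bijection with the single omitted index, so the bijection $\sigma$ on $\{1,2,3\}$ induces a bijection on the three pairs, and Step~1 says this bijection carries each splitting type in the first multi-set to an equal splitting type in the second multi-set. Therefore the two multi-sets
\[
\{(D_{11},D_{12};B_1),(D_{11},D_{13};B_1),(D_{12},D_{13};B_1)\}
\quad\text{and}\quad
\{(D_{21},D_{22};B_2),(D_{21},D_{23};B_2),(D_{22},D_{23};B_2)\}
\]
coincide as multi-sets, contradicting the hypothesis that they are distinct. This yields the corollary.

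The main obstacle, and the step requiring the most care, is Step~1: one must be sure that a single homeomorphism $h$ simultaneously realizes the topological equivalence needed for \emph{all three} pairs $\{D_{1j},D_{1k}\}$ at once, using one lift to the double covers. This is where it matters that $h$ is a homeomorphism of the ambient $\PP^2$ carrying $B_1$ to $B_2$ and the \emph{set} $\{D_{11},D_{12},D_{13}\}$ to $\{D_{21},D_{22},D_{23}\}$ — so the same lift $\tilde h:S'_{B_1}\to S'_{B_2}$ (well defined up to the deck transformation) transports every splitting decomposition $D_{1j}^{\pm}$ to $\{D_{2\sigma(j)}^{+},D_{2\sigma(j)}^{-}\}$, and the proof of Proposition~\ref{prop:splitting_type} in \cite{bst24} goes through verbatim for each pair with this common lift. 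Once that is granted, the generalization from two to three (or, with the obvious modification, to any finite number of) splitting curves is formal. I would spell out Step~1 by invoking the argument of \cite[Proposition~2.5]{bst24} and only indicate the bookkeeping for the extra curve, since the substantive topology is identical.
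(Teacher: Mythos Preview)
Your argument is correct and follows essentially the same approach as the paper: reduce to Proposition~\ref{prop:splitting_type} applied pairwise, then observe that the induced bijection on index pairs forces the two multi-sets of splitting types to coincide. The paper packages the combinatorial bookkeeping by citing \cite[Proposition~1.2]{bgst17} (taking $A$ to be the set of possible splitting types), whereas you spell it out directly; your caution in Step~1 about a common lift is not strictly needed, since the contrapositive of Proposition~\ref{prop:splitting_type} already applies to each pair separately under the single homeomorphism $h$.
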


\proof With Proposition~\ref{prop:splitting_type} we apply \cite[Proposition 1.2]{bgst17} to our case as
$A=$ the set of possible splitting types and our statement follows. \qed

\subsection{A Zariski triple and  strong Ziegler 
pairs for conic-line arrangements of degree \texorpdfstring{$8$}{}}

In this section, we give an example of a strong Ziegler pare based on a Zariski triple of conic-line arrangemets of degree 8. The combinatorics of these arrangements is obtained by adding an additional bitangent line to $\Cmb_{223}$ described in the previous section. More precisely, the conic-line arrangements that we consider consist of the following curves
\begin{itemize}
    \item $C_1, C_2$ : smooth conics intersecting transversely at four points $p_1, \ldots, p_4$.
    \item $M_1, \ldots, M_4$ : the four bitangent lines of $C_1+C_2$.
    \item $D$ : a weak contact conic to $C_1+C_2$, i.e. a conic passing through two of $p_1, \ldots, p_4$ that is tangent to both $C_1$ and $C_2$.
\end{itemize}
and are of the form
\[
\mcB = C_1+C_2+D+M_i+M_j
\]
with the following assumptions on the combinatorics :
\begin{itemize}
    \item[(i)] $C_i \cap M_j \cap D =\emptyset$ for $i=1, 2, j = 1, \ldots, 4$. Namely, the tangent point of $C_i, M_j$ and $C_i, D$ are distinct. (This condition is necessary satisfied if $D$ is irreducible.)
    \item[(ii)] $D\cap M_i \cap M_j=\emptyset$. Namely, $D, M_i, M_j$ do not intersect in one point.
\end{itemize}
\begin{figure}[H]
\centering
\begin{minipage}[b]{0.32\columnwidth}
    \centering
    \includegraphics[bb=00 00 700 700, width=4cm]{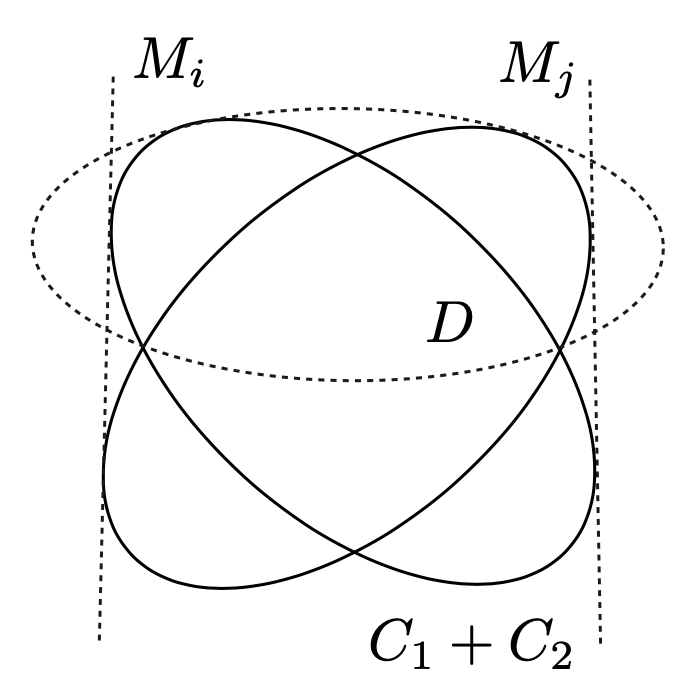}
    \caption{$\Cmb_{223}+$another bitangent}
    \label{fig:cmb_223} 
\end{minipage}
\end{figure}

A conic-line arrangement with the above combinatorics has $7$ nodes, $6$ tacnodes and $2$ ordinary triple points as singularities, as depicted in Figure \ref{fig:cmb_223}. Note that the node arising from the intersection point of $M_i$ and $M_j$ is at infinity and is not depicted.

\begin{prop} Let $C_1, C_2, D, M_1,\ldots, M_4$ be as in Example~\ref{ex:cmb223}. Put
\begin{align*}
        \mcB_1 = C_1+C_2+D+M_1+M_2\\
        \mcB_2 = C_1+C_2+D+M_1+M_3\\
        \mcB_3 = C_1+C_2+D+M_3+M_4.
    \end{align*}
Then
\begin{enumerate}
    \item[\rm{(i)}] $(\mcB_1, \mcB_2, \mcB_3)$ is a
    Zariski triple
    \item[\rm{(ii)}] The  minimal free resolutions 
    for $M(\mcB_i)$ ($i = 1, 2, 3$) are
    \begin{itemize}
    \item Resolution of $M(\mcB_1)$ : 
    \[
    0 \to S(-14) \to  S(-12)^{\oplus 2}\oplus S(-11) \to S(-7)^{\oplus 3} \to S(0).
    \]
 \item Resolutions of $M(\mcB_2)$ and $M(\mcB_3)$ :  
    \[
    0 \to S(-13)^{\oplus 3} \to  S(-12)^{\oplus 5} \to S(-7)^{\oplus 3} \to S(0).
    \]
    In particular, $(\mcB_1, \mcB_2)$ and 
    $(\mcB_1, \mcB_3)$ are strong Ziegler pairs.
\end{itemize}
\end{enumerate}
\end{prop}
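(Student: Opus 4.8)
The plan is to split the statement into its two parts, treating the topological claim (i) and the algebraic claim (ii) by completely different machinery. For part (i), I would invoke Corollary~\ref{cor:splitting_type} with the branch locus $B_i = C_1 + C_2$ (which has even degree $4$, so the associated double cover is defined) and with the three distinguished splitting curves taken among the lines $M_1, \ldots, M_4$ and the conic $D$. First I must verify that each of $D, M_1, \ldots, M_4$ is genuinely a splitting curve with respect to $C_1+C_2$: for the bitangent lines this is classical (a line bitangent to the branch locus pulls back to two components on the double cover), and for the weak-contact conic $D$ one checks that $f'^{\ast}_{C_1+C_2}D$ splits using the tangency conditions and the passage through two of the $p_j$. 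Then I would compute, for each of the three arrangements $\mcB_1, \mcB_2, \mcB_3$, the multi-set of splitting types of the three relevant pairs among the chosen splitting curves, using the explicit equations in Example~\ref{ex:cmb223} (the local intersection multiplicities over $\PP^2 \setminus (C_1+C_2)$ are a finite computation, most conveniently done by computer). If these three multi-sets are pairwise distinct, Corollary~\ref{cor:splitting_type} immediately yields that no homeomorphism of $\PP^2$ carries one arrangement to another respecting the labelled configuration; since the combinatorics are equivalent by construction, $(\mcB_1, \mcB_2, \mcB_3)$ is a Zariski triple.

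For part (ii), the argument is purely computational: one forms the Jacobian ideal $J_{\mcB_i} = \langle \partial_x f_{\mcB_i}, \partial_y f_{\mcB_i}, \partial_z f_{\mcB_i}\rangle \subset S$ for each $i$, where $f_{\mcB_i}$ is the degree-$8$ defining polynomial obtained by multiplying the five factors from Example~\ref{ex:cmb223}, and then runs the SageMath command \texttt{graded\_free\_resolution} on $S/J_{\mcB_i}$ to obtain the minimal graded free resolution of the Milnor algebra $M(\mcB_i)$. The output should be exactly the two resolutions displayed in the statement. Since a minimal free resolution is unique up to isomorphism, and the resolution of $M(\mcB_1)$ has a different shape (a single last syzygy $S(-14)$, with second-to-last term $S(-12)^{\oplus 2}\oplus S(-11)$) than that of $M(\mcB_2)$ and $M(\mcB_3)$ (last syzygy $S(-13)^{\oplus 3}$, second-to-last $S(-12)^{\oplus 5}$), the graded $S$-modules $\mathrm{AR}(\mcB_1)$ and $\mathrm{AR}(\mcB_2)$ — which are determined by these resolutions — are not isomorphic; likewise for $\mathrm{AR}(\mcB_1)$ and $\mathrm{AR}(\mcB_3)$. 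Hence $(\mcB_1,\mcB_2)$ and $(\mcB_1,\mcB_3)$ are strong Ziegler pairs, while the equivalence of their combinatorics was already established in part (i) and the surrounding discussion.

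The main obstacle I anticipate is not in any single step but in the bookkeeping of part (i): one must be careful that the three distinguished splitting curves are chosen consistently across $\mcB_1, \mcB_2, \mcB_3$ so that Corollary~\ref{cor:splitting_type} genuinely applies (its hypothesis is about labelled triples of splitting curves, and the conclusion only rules out homeomorphisms respecting those labels), and one must check that the transversality condition over $\PP^2\setminus B_i$ holds for each pair — this is where assumptions (i) and (ii) on the combinatorics (that $D\cap M_i\cap M_j=\emptyset$ and that the various tangent points are distinct) are used. A secondary point is that the splitting-type computation must be done over $\PP^2\setminus(C_1+C_2)$, i.e. one discards intersection points lying on $C_1$ or $C_2$; overlooking this is the easiest way to get a wrong multi-set. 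Once these are handled correctly, the distinctness of the three multi-sets is a finite verification, and part (ii) is a black-box resolution computation whose only subtlety is confirming that the displayed resolutions are indeed minimal (which \texttt{graded\_free\_resolution} guarantees).
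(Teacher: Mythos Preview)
Your proposal is correct and follows essentially the same approach as the paper. The only difference is that for part~(i) the paper simply quotes from \cite{absst} the values $(D, M_i; C_1+C_2) = (0,2)$ for $i=1,2$ and $(1,1)$ for $i=3,4$, which already make the three multi-sets pairwise distinct without needing the $(M_i, M_j; C_1+C_2)$ terms; part~(ii) is, as you say, a direct call to \texttt{graded\_free\_resolution}.
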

\begin{proof}
(i)
 By \cite[Section 4.2]{absst}, we have
\[
(D, M_i; C_1+C_2)=\begin{cases} (0, 2) & i=1, 2, \\
(1, 1) & i=3, 4.
\end{cases}
\]
Hence $(\mcB_1, \mcB_2, \mcB_3)$ is a Zarsiki
triple by Corollary~\ref{cor:splitting_type}.

(ii) Our statement is immediate by using
 {\tt graded\_free\_resolution}.
\end{proof}

\begin{rem} For the above example, $t(\mcB_1) = 2$ and $t(\mcB_2) = t(\mcB_3) = 3$.   
\end{rem}
\begin{rem} 
The existence of "special curves" such as the conic passing through certain singular points of the arrangement as described above seems to affect the graded free resolution. 
\end{rem}

\begin{rem}
    It is interesting to see that the Zariski pair with combinatorics $\Cmb_{223}$ did not yield a strong Ziegler pair decisively, but it leads to a strong Ziegler pair. Adding an additional bitangent line seems to have exposed the underlying differences of the pair in the form of differences in the resolution.  It may be interesting to investigate other Zariski pairs and see if something similar occurs.  
\end{rem}

\bibliographystyle{spmpsci}
\bibliography{biblio.bib}

\noindent Shinzo BANNAI\\
Department of Applied Mathematics, \\
Faculty of Science, \\
Okayama University of Science, \\
1-1 Ridai-cho, Kita-ku, Okayama 700-0005 JAPAN
\\
{\tt bannai@ous.ac.jp}

\noindent Hiro-o TOKUNAGA\\
Department of Mathematical  Sciences, Graduate School of Science, \\
Tokyo Metropolitan University, 1-1 Minami-Ohsawa, Hachiohji 192-0397 JAPAN \\
{\tt tokunaga@tmu.ac.jp}
\end{document}